
\documentclass[10pt,reqno]{amsart}

\vfuzz2pt 

\usepackage{amssymb}
\usepackage{amsmath}
\usepackage{amsfonts}
\usepackage{amscd}
\usepackage{mathrsfs}
\usepackage{graphicx}
\usepackage{yhmath}
\usepackage{mathdots}

\setlength{\topmargin}{-0.06in}
\setlength{\textheight}{8.52in}
\setlength{\textwidth}{5.68in}
\setlength{\oddsidemargin}{0.46in}
\setlength{\evensidemargin}{0.46in}

\numberwithin{equation}{section}
\newtheorem{theorem}{Theorem}[section]
\newtheorem{claim}[theorem]{Claim}

\newtheorem{proposition}[theorem]{Proposition}

\newcommand{\n}[1]{\left\vert#1\right\vert}

\begin{document}
\title[]{Algebraic differential independence regarding \\ the Riemann $\boldsymbol{\zeta}$-function and the Euler $\boldsymbol{\Gamma}$-function}

\author[]{Qi Han and Jingbo Liu$^\star$}

\address{Department of Mathematics, Texas A\&M University, San Antonio, Texas 78224, USA
\vskip 2pt Email: {\sf qhan@tamusa.edu (QH) \hspace{0.6mm} jliu@tamusa.edu (JL)}}

\thanks{{\sf 2010 Mathematics Subject Classification.} Primary 11M06, 33B15. Secondary 12H05, 30D30, 34M15.}

\thanks{{\sf Keywords.} Algebraic differential equations, the Riemann zeta-function, the Euler gamma-function.}

\thanks{$^\star$Jingbo Liu is the corresponding author.}

\begin{abstract}
  In this paper, we prove that $\boldsymbol{\zeta}$ cannot be a solution to any nontrivial algebraic differential equation whose coefficients are polynomials in $\boldsymbol{\Gamma},\boldsymbol{\Gamma}^{(n)}$ and $\boldsymbol{\Gamma}^{(\ell n)}$ over the ring of polynomials in $\mathbf{C}$, where $\ell,n\geq1$ are positive integers.
\end{abstract}

\maketitle

\section{Introduction and main results}
It is a celebrated result of H\"{o}lder \cite{Ho} in 1887 that the Euler gamma-function
\begin{equation}
\boldsymbol{\Gamma}(z)=\int_0^{+\infty}t^{z-1}e^{-t}dt\nonumber
\end{equation}
cannot satisfy any nontrivial algebraic differential equation whose coefficients are polynomials in $\mathbf{C}$.
That is, if $P(v_0,v_1,\ldots,v_n)$ is a polynomial of $n+1$ variables with polynomial coefficients in $z\in\mathbf{C}$ such that $P(\boldsymbol{\Gamma},\boldsymbol{\Gamma}',\ldots,\boldsymbol{\Gamma}^{(n)})(z)\equiv0$ for $z\in\mathbf{C}$, then necessarily $P\equiv0$.
Hilbert \cite{Hi}, in his lecture addressed at the International Congress of Mathematicians at Paris in 1900 for his famous 23 problems, stated in Problem 18 that the Riemann zeta-function
\begin{equation}
\boldsymbol{\zeta}(z)=\sum_{n=1}^{+\infty}\frac{1}{n^z}\nonumber
\end{equation}
cannot satisfy any nontrivial algebraic differential equation whose coefficients are polynomials in $\mathbf{C}$, and this problem was solved with great generality (towards a question posed by Hilbert in his Problem 18) by Mordukhai-Boltovskoi \cite{MB} and Ostrowski \cite{Os}, independently.

It is well-known that $\boldsymbol{\zeta}$ and $\boldsymbol{\Gamma}$ are related by the Riemann functional equation
\begin{equation}\label{Eq1.1}
\boldsymbol{\zeta}(1-z)=2^{1-z}\pi^{-z}\cos\Big(\frac{1}{2}\pi z\Big)\boldsymbol{\Gamma}(z)\boldsymbol{\zeta}(z).
\end{equation}
By virtue of \eqref{Eq1.1}, one knows from Bank and Kaufman \cite{BH} that neither $\boldsymbol{\zeta}$ nor $\boldsymbol{\Gamma}$ can satisfy any nontrivial algebraic differential equation whose coefficients are meromorphic functions, loosely speaking, growing strictly slower than $e^z$ as $\n{z}\to\infty$\footnote{The precise description involves the classical Nevanlinna theory, which is not used in this paper.} or having period $1$; detailed descriptions of this matter and some related results of an entirely different flavor can be found in Li and Ye \cite{LY3}.
In addition, it is interesting to notice Van Gorder \cite{VG} showed that $\boldsymbol{\zeta}$ does satisfy formally an infinite order linear differential equation having analytic coefficients.

Recently, Markus \cite{Ma} proved that $\boldsymbol{\zeta}(\sin(2\pi z))$ cannot satisfy any nontrivial algebraic differential equation whose coefficients are polynomials in $\boldsymbol{\Gamma}$ and its derivatives, and he conjectured that $\boldsymbol{\zeta}$ itself cannot satisfy any nontrivial algebraic differential equation whose coefficients are polynomials in $\boldsymbol{\Gamma}$ and its derivatives, either.
Thus, we are interested in knowing whether there is a nontrivial polynomial $P(u_0,u_1,\ldots,u_m;v_0,v_1,\ldots,v_n)$ such that, for $z\in\mathbf{C}$,
\begin{equation}
P(\boldsymbol{\zeta},\boldsymbol{\zeta}',\ldots,\boldsymbol{\zeta}^{(m)};\boldsymbol{\Gamma},\boldsymbol{\Gamma}',\ldots,\boldsymbol{\Gamma}^{(n)})(z)\equiv0.\nonumber
\end{equation}

Notice the preceding result \cite{BH} cannot be applied to algebraic differential equations involving both $\boldsymbol{\zeta}$ and $\boldsymbol{\Gamma}$ simultaneously, since $\boldsymbol{\zeta}$ and $\boldsymbol{\Gamma}$ grow strictly faster than $e^z$ when $\n{z}\to\infty$; see Ye \cite{Ye}.
In this aspect, some related results on the value distribution properties of $\boldsymbol{\zeta}$ and $\boldsymbol{\Gamma}$ as well as their general extensions can be found, for instance, in Li \cite{Li} and Han \cite{Ha}.

In this paper, we shall prove the main result as follows.

\begin{theorem}\label{thm1.1}
Let $\ell,m,n\geq0$ be nonnegative integers.
Assume $P(u_0,u_1,\ldots,u_m;v_0,v_1,v_2)$ is a polynomial of $m+4$ variables with polynomial coefficients in $z\in\mathbf{C}$ such that
\begin{equation}\label{Eq1.2}
P(\boldsymbol{\zeta},\boldsymbol{\zeta}',\ldots,\boldsymbol{\zeta}^{(m)};\boldsymbol{\Gamma},\boldsymbol{\Gamma}^{(n)},\boldsymbol{\Gamma}^{(\ell n)})(z)\equiv0
\end{equation}
for $z\in\mathbf{C}$.
Then, necessarily the polynomial $P$ must be identically equal to zero.
\end{theorem}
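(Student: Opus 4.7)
The plan is to argue by contradiction, assuming $P\not\equiv 0$, and to extract information from the Laurent expansions of $\boldsymbol{\Gamma}$, $\boldsymbol{\Gamma}^{(n)}$, and $\boldsymbol{\Gamma}^{(\ell n)}$ at the simple poles $z=-k$ (with $k$ a large positive integer), at which all derivatives of $\boldsymbol{\zeta}$ are holomorphic. Setting $A_k:=(-1)^k/k!$, one has
$$\boldsymbol{\Gamma}(z)=\frac{A_k}{z+k}+O(1),\qquad \boldsymbol{\Gamma}^{(j)}(z)=\frac{A_k\,(-1)^j j!}{(z+k)^{j+1}}+O\!\left((z+k)^{-j}\right),$$
so that, after substitution, the monomial $v_0^\beta v_1^\gamma v_2^\delta$ contributes a pole of order $w_{\beta\gamma\delta}:=\beta+\gamma(n+1)+\delta(\ell n+1)$ at $z=-k$. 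Let $W$ denote the largest value of $w_{\beta\gamma\delta}$ among the nonvanishing coefficient monomials of $P$.

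First, I would multiply \eqref{Eq1.2} through by $(z+k)^W$ and pass to $z\to-k$; only the top-weight monomials survive, producing for each sufficiently large $k$ an identity
$$0=\sum_{w_{\beta\gamma\delta}=W}p_{\alpha,\beta,\gamma,\delta}(-k)\,\mathbf{u}^\alpha(-k)\,A_k^{\beta+\gamma+\delta}\,((-1)^n n!)^\gamma\,((-1)^{\ell n}(\ell n)!)^\delta,$$
with $\mathbf{u}^\alpha(-k):=\prod_{i=0}^m\boldsymbol{\zeta}^{(i)}(-k)^{\alpha_i}$. Since $w_{\beta\gamma\delta}=s+n(\gamma+\ell\delta)$ where $s:=\beta+\gamma+\delta$, grouping by $s$ (which must be congruent to $W$ modulo $n$) puts the identity into the form $\sum_s A_k^s G_s(k)=0$. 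Because the factorial rates $|A_k|^s=(k!)^{-s}$ separate the contributions of different $s$, while the Bernoulli-number asymptotics control the growth of $\mathbf{u}^\alpha(-k)$, a careful comparison of orders forces $G_s(k)\equiv 0$ for every admissible $s$ and all large $k$.

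Second, within each $s$ the relevant monomials form a one-parameter family parametrized by $\delta$ (because $(s,t)$ determines $\gamma+\ell\delta$ but not $\delta$ itself), and their leading coefficients differ by the multinomial factor $\mu^\delta$ with $\mu:=(\ell n)!/(n!)^\ell$. To disentangle the family, I would iteratively match successive sub-leading Laurent coefficients, each of which is a polynomial combination of $k$ and of polygamma values $\psi^{(r)}(k+1)$, quantities of merely polynomial-in-$k$ growth. This matching yields, for each $(\alpha,\beta,\gamma,\delta)$ in the top stratum, an identity $\sum_\alpha q_\alpha(z)\,\mathbf{u}^\alpha(z)\equiv 0$ with polynomial coefficients $q_\alpha(z)$; the Mordukhai--Boltovskoi--Ostrowski theorem then forces each $q_\alpha\equiv 0$. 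Iterating the whole scheme by descending weight $W$ makes every coefficient of $P$ vanish, the desired contradiction.

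The main obstacle is the second step: monomials of identical pole order at $-k$ cannot be resolved by the leading Laurent coefficient alone, and one must climb the Laurent expansion in a controlled manner. The arithmetic of the weights $1$, $n+1$, $\ell n+1$ — in particular the algebraic relation $w(v_2)=\ell\,w(v_1)-(\ell-1)\,w(v_0)$ — is precisely what makes this finite disentanglement terminate, and it is for this reason that the theorem is phrased for the three specific derivatives $\boldsymbol{\Gamma}$, $\boldsymbol{\Gamma}^{(n)}$, $\boldsymbol{\Gamma}^{(\ell n)}$ rather than an arbitrary triple.
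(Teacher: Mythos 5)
Your route---reading off Laurent coefficients at the poles $z=-k$ of $\boldsymbol{\Gamma}$---is genuinely different from the paper's (which works on the vertical line $\mathrm{Re}\,z=\tfrac34$, using Stirling asymptotics of $\boldsymbol{\Gamma}^{(j)}/\boldsymbol{\Gamma}$ together with Voronin's denseness theorem), but it has a fatal gap at the point where you pass from numerical identities at the negative integers to identities in the function $\boldsymbol{\zeta}$. What your Laurent-coefficient extraction produces is, at best, the statement that certain expressions $\sum_\alpha q_\alpha(-k)\,\mathbf{u}^\alpha(-k)$ vanish for all large integers $k$. The Mordukhai--Boltovskoi--Ostrowski theorem says nothing about this: it asserts that no nontrivial $Q(z;\boldsymbol{\zeta},\ldots,\boldsymbol{\zeta}^{(m)})$ vanishes \emph{identically} on $\mathbf{C}$ (equivalently, on a set with an accumulation point), whereas the negative integers are discrete. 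Moreover, the ``independence at the negative integers'' you would actually need is false: $\boldsymbol{\zeta}(-2j)=0$ for all $j\geq1$, so the nontrivial polynomial $q(u_0)=u_0$ already vanishes along half of your sequence, and the remaining values $\boldsymbol{\zeta}(1-2j)=-B_{2j}/(2j)$ are rational numbers admitting plenty of pointwise relations. No amount of climbing the Laurent expansion repairs this, because every identity you can extract lives only on the discrete set $\{-k\}$.

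A second, lesser problem is the first step's claim that the factors $A_k^s=((-1)^k/k!)^s$ ``separate the contributions of different $s$.'' The quantities $\mathbf{u}^\alpha(-k)$ themselves grow factorially (by the functional equation, $\left\vert\boldsymbol{\zeta}(-k)\right\vert\asymp k!/(2\pi)^{k}$ for odd $k$), so the size of a term is governed by $\left\vert\alpha\right\vert-s$ rather than by $s$ alone, and terms with different $s$ can balance; your ``careful comparison of orders'' would have to stratify simultaneously in $s$ and in the $\boldsymbol{\zeta}$-degree, which is not set up. The paper sidesteps both difficulties by working where $\boldsymbol{\zeta}$ is wild rather than where $\boldsymbol{\Gamma}$ is singular: Voronin's theorem supplies a sequence $z_k=\tfrac34+\mathtt{i}y_k$ along which the $\boldsymbol{\zeta}$-coefficient polynomials are uniformly pinned between $1$ and a constant, after which everything reduces to comparing the slowly varying quantities $F\sim(\log z)^{n}$ and $H\sim \ell(\ell-1)n^{2}/(2z(\log z)^{2})$. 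If you want to salvage a pole-based argument, you must replace the appeal to Mordukhai--Boltovskoi--Ostrowski by a statement about the values of $\boldsymbol{\zeta}$ and its derivatives at negative integers, which is a much harder (and in part false) assertion.
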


To prove our result, we need a renowned theorem from Voronin \cite{Vo}.

\begin{proposition}\label{prop1.2}
Fix $x\in\big(\frac{1}{2},1\big)$ for $z=x+\mathtt{i}y\in\mathbf{C}$.
Define
\begin{equation}
\gamma(y):=(\boldsymbol{\zeta}(x+\mathtt{i}y),\boldsymbol{\zeta}'(x+\mathtt{i}y),\ldots,\boldsymbol{\zeta}^{(m)}(x+\mathtt{i}y))\nonumber
\end{equation}
to be a curve in $y$.
Then, $\gamma(\mathbf{R})$ is everywhere dense in $\mathbf{C}^{m+1}$.
\end{proposition}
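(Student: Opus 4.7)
The plan is to follow Voronin's classical Hilbert-space approach. Fix a target $(a_0,\ldots,a_m)\in\mathbf{C}^{m+1}$ together with $\epsilon>0$, and seek $y\in\mathbf{R}$ with $\n{\boldsymbol{\zeta}^{(k)}(x+\mathtt{i}y)-a_k}<\epsilon$ for every $k=0,1,\ldots,m$. I would first replace $\boldsymbol{\zeta}$ by its truncated Euler product
\[
\boldsymbol{\zeta}_N(s):=\prod_{p\leq N}\bigl(1-p^{-s}\bigr)^{-1},
\]
and invoke the classical mean-square estimate
\[
\limsup_{T\to\infty}\frac{1}{2T}\int_{-T}^{T}\bigl|\boldsymbol{\zeta}^{(k)}(x+\mathtt{i}y)-\boldsymbol{\zeta}_N^{(k)}(x+\mathtt{i}y)\bigr|^2\,dy\longrightarrow0\quad\text{as }N\to\infty,
\]
valid precisely because $x>\tfrac{1}{2}$. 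Combined with Chebyshev's inequality, this reduces the density problem for $\boldsymbol{\zeta}$ to the corresponding density problem for $\boldsymbol{\zeta}_N$ (and its first $m$ derivatives), with $N$ free to be taken arbitrarily large.

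Next, I would exploit that $\boldsymbol{\zeta}_N^{(k)}(x+\mathtt{i}y)$ depends on $y$ only through the phases $\omega_p:=p^{-\mathtt{i}y}$ for $p\leq N$. By the $\mathbf{Q}$-linear independence of $\{\log p\}_p$, Kronecker's approximation theorem implies that the orbit $\{(\omega_p)_{p\leq N}:y\in\mathbf{R}\}$ is dense in the torus $\mathbf{T}^{\pi(N)}$. So it suffices to prove the following support statement: the map
\[
(\omega_p)_{p\leq N}\longmapsto\bigl(F_N(\omega),F_N^{(1)}(\omega),\ldots,F_N^{(m)}(\omega)\bigr),\qquad F_N(\omega):=\prod_{p\leq N}(1-\omega_p p^{-x})^{-1},
\]
has image dense in $\mathbf{C}^{m+1}$ once $N$ is large enough, where $F_N^{(k)}$ denotes the analog of $\boldsymbol{\zeta}_N^{(k)}$ obtained by substituting $\omega_p$ for $p^{-\mathtt{i}y}$ before differentiating in $s$.

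The main obstacle is this support statement, which I would establish by the standard rearrangement trick. Passing to logarithms, $\log F_N=-\sum_{p\leq N}\log(1-\omega_p p^{-s})$. In the Hilbert space $H^2$ of square-integrable holomorphic functions on a small open disk centered at $s=x$ (chosen to lie inside $\{\operatorname{Re}(s)>\tfrac{1}{2}\}$), each summand $u_p(s):=-\log(1-p^{-s})$ lies in $H^2$, and the freedom to choose $\omega_p\in\mathbf{T}$ amounts to rotating $u_p$ by an independent phase. Because $\sum_p\nm{u_p}_{H^2}^2<\infty$ (since $2x>1$) while $\sum_p\nm{u_p}_{H^2}=\infty$ (since $x<1$), a classical rearrangement theorem for conditionally convergent series in a Hilbert space, due to Pecherskii, ensures that the set $\{\sum_p e^{\mathtt{i}\theta_p}u_p:(\theta_p)\in\mathbf{T}^{\mathbf{N}}\}$ is dense in $H^2$. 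Exponentiating and reading off the first $m+1$ Taylor coefficients at $s=x$ then yields the desired density in $\mathbf{C}^{m+1}$. This Hilbert-space rearrangement step is the essential difficulty, and it is precisely where the assumption $x\in\bigl(\tfrac{1}{2},1\bigr)$ is used in a critical way.
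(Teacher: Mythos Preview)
The paper does not prove Proposition~\ref{prop1.2} at all: it is simply quoted as ``a renowned theorem from Voronin \cite{Vo}'' and used as a black box. So there is no proof in the paper to compare your sketch against.

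Your outline is the standard Voronin argument and is essentially correct: approximate $\boldsymbol{\zeta}$ in mean square on the line $\operatorname{Re} s=x$ by the truncated Euler product $\boldsymbol{\zeta}_N$ (this is where $x>\tfrac12$ enters), use the $\mathbf{Q}$-linear independence of $\{\log p\}$ together with Kronecker--Weyl to reduce to a statement about the full torus of phases, and then invoke Pecherskii's rearrangement theorem in $H^2$ on a small disk to get density of $\log F_N$ (this is where $x<1$ enters, via the divergence of $\sum_p p^{-x}$). Two minor points you should tighten if you actually write this out: first, the passage from density of $\log F_N$ in $H^2$ to density of the jet $(F_N,F_N',\ldots,F_N^{(m)})$ in $\mathbf{C}^{m+1}$ requires a word, since exponentiation is nonlinear and the target value $a_0$ may be $0$ --- one handles this by first perturbing to $a_0\neq0$ and noting that $g\mapsto(e^g(x),(e^g)'(x),\ldots,(e^g)^{(m)}(x))$ is continuous and open from a neighborhood in $H^2$ onto a neighborhood in $\mathbf{C}^{m+1}$; second, the mean-square approximation must be carried through for the derivatives $\boldsymbol{\zeta}^{(k)}-\boldsymbol{\zeta}_N^{(k)}$ as well, which is routine (Cauchy estimates on a slightly larger half-plane, or direct computation) but worth stating. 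With those caveats, your sketch is faithful to Voronin's original proof, which is exactly what the paper is citing.
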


Note our result is closely related to Li and Ye \cite{LY2}, while our main scheme follows that of it; yet, we consider higher order derivatives of $\boldsymbol{\Gamma}$ rather than just $\boldsymbol{\Gamma}'$ and $\boldsymbol{\Gamma}''$.
On the other hand, some remarks seem necessary regarding the work \cite{LY1,LY3}.
In \cite{LY1}, a quite restrictive condition on the polynomials involved there was added in its main theorem, and our result here recovers its corollary as a special case for $\ell=n=1$; in its corollary, the restrictive condition was dropped, and general polynomials were used but only for $\boldsymbol{\Gamma}$ and $\boldsymbol{\Gamma}'$.
As of \cite{LY3}, using its terminology, only $\boldsymbol{\zeta}$ and the family of {\sl distinguished polynomials} of $\boldsymbol{\Gamma},\boldsymbol{\Gamma}',\ldots,\boldsymbol{\Gamma}^{(n)}$ were considered.

\section{Proof of Theorem \ref{thm1.1}}
Let $P(u_0,u_1,\ldots,u_m;v_0,v_1,v_2)$ be a polynomial in its arguments whose coefficients are polynomials in $z\in\mathbf{C}$ such that \eqref{Eq1.2} is satisfied.
In view of the discussions in \cite[Section 2]{Ma}, one can simply assume that the coefficients of $P$ are constants.
Denote by
\begin{equation}
\Lambda:=\left\{\lambda:=(\lambda_0,\lambda_1,\lambda_2):\lambda_0,\lambda_1,\lambda_2~\text{are nonnegative integers}\right\}\nonumber
\end{equation}
a triple-index set having a finite cardinality, and define
\begin{equation}
\Lambda_p:=\left\{\lambda\in\Lambda:\n{\lambda}=p~\text{with}~\n{\lambda}:=\lambda_0+\lambda_1+\lambda_2\right\}\nonumber
\end{equation}
and
\begin{equation}
\Lambda^\star_q:=\left\{\lambda\in\Lambda:\n{\lambda}^\star=q~\text{with}~\n{\lambda}^\star:=\lambda_1+\ell\lambda_2\right\}.\nonumber
\end{equation}
Then, there exists a nonnegative integer $L$ such that
\begin{equation}
P(u_0,u_1,\ldots,u_m;v_0,v_1,v_2)=\sum_{p=0}^L\sum_{\lambda\in\Lambda_p}a_\lambda(u_0,u_1,\ldots,u_m)v_0^{\lambda_0}v_1^{\lambda_1}v_2^{\lambda_2},\nonumber
\end{equation}
where $a_\lambda(u_0,u_1,\ldots,u_m)$ is a polynomial of $m+1$ variables with constant coefficients.
Set, for each $p=0,1,\ldots,L$, the associated homogeneous polynomial to be
\begin{equation}
P_p(u_0,u_1,\ldots,u_m;v_0,v_1,v_2):=\sum_{\lambda\in\Lambda_p}a_\lambda(u_0,u_1,\ldots,u_m)v_0^{\lambda_0}v_1^{\lambda_1}v_2^{\lambda_2}.\nonumber
\end{equation}

Rearrange $P_p(u_0,u_1,\ldots,u_m;v_0,v_1,v_2)$ if necessary in $v:=(v_1,v_2,v_3)$ in the ascending order of $q=\n{\lambda}^\star$, along with the standard lexicographical order when two or more terms having the same indices $p,q$ appear, to find a nonnegative integer $M_p$ such that
\begin{equation}\label{Eq2.1}
P_p(u_0,u_1,\ldots,u_m;v_0,v_1,v_2)=\sum_{q=0}^{M_p}\sum_{\lambda\in\Lambda_p\cap\Lambda^\star_q}a_\lambda(u_0,u_1,\ldots,u_m)v_0^{\lambda_0}v_1^{\lambda_1}v_2^{\lambda_2}.
\end{equation}
As a consequence, it follows that
\begin{equation}\label{Eq2.2}
P(u_0,u_1,\ldots,u_m;v_0,v_1,v_2)=\sum_{p=0}^L\sum_{q=0}^{M_p}\sum_{\lambda\in\Lambda_p\cap\Lambda^\star_q}a_\lambda(u_0,u_1,\ldots,u_m)v_0^{\lambda_0}v_1^{\lambda_1}v_2^{\lambda_2}.
\end{equation}

\begin{claim}\label{clm2.1}
Assume \eqref{Eq1.2} holds.
Then, for each $0\leq p\leq L$ and all $z\in\mathbf{C}$, one has
\begin{equation}
P_p(\boldsymbol{\zeta},\boldsymbol{\zeta}',\ldots,\boldsymbol{\zeta}^{(m)};\boldsymbol{\Gamma},\boldsymbol{\Gamma}^{(n)},\boldsymbol{\Gamma}^{(\ell n)})(z)\equiv0.\nonumber
\end{equation}
\end{claim}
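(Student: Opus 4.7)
My plan is to induct on $p$ from $0$ up to $L$, extracting the vanishing of each $P_p(\ldots)$ by exploiting that, on a vertical line $\Re z = x \in (1/2,1)$, Stirling's formula gives $|\boldsymbol{\Gamma}(x+\mathtt{i}y)| \ll |y|^{x-1/2} e^{-\pi|y|/2}$ and $|\boldsymbol{\Gamma}^{(j)}(x+\mathtt{i}y)/\boldsymbol{\Gamma}(x+\mathtt{i}y)| \ll (\log|y|)^j$, while each $\boldsymbol{\zeta}^{(k)}(x+\mathtt{i}y)$ grows only polynomially in $|y|$. Consequently every monomial of $P_p(\boldsymbol{\zeta},\ldots;\boldsymbol{\Gamma},\boldsymbol{\Gamma}^{(n)},\boldsymbol{\Gamma}^{(\ell n)})(x+\mathtt{i}y)$ is bounded by $|y|^{C_p}(\log|y|)^{D_p}|\boldsymbol{\Gamma}(x+\mathtt{i}y)|^p$ for some constants $C_p,D_p$, so it decays like $e^{-p\pi|y|/2}$, strictly faster for larger $p$.

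For the base case $p=0$, rewriting \eqref{Eq1.2} at $z=x+\mathtt{i}y$ gives $P_0(\gamma(y))=-\sum_{p\geq 1}P_p(\ldots)(x+\mathtt{i}y)$; by the preceding estimate the right-hand side is $O(e^{-\pi|y|/2}|y|^C)$ and tends to $0$ as $|y|\to\infty$. Proposition~\ref{prop1.2} supplies, for any $\xi\in\mathbf{C}^{m+1}$, a sequence $y_k\to\infty$ with $\gamma(y_k)\to\xi$, forcing $P_0(\xi)=0$. Since $\xi$ is arbitrary, $P_0$ vanishes identically as a polynomial, and in particular $P_0(\boldsymbol{\zeta},\ldots,\boldsymbol{\zeta}^{(m)};\boldsymbol{\Gamma},\boldsymbol{\Gamma}^{(n)},\boldsymbol{\Gamma}^{(\ell n)})(z)\equiv 0$.

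For the inductive step, assuming $P_{p'}(\ldots)\equiv 0$ for all $p'<p$, the residual identity $\sum_{p'\geq p}P_{p'}(\ldots)(z)\equiv 0$ may be divided by $\boldsymbol{\Gamma}(z)^p$ (legal since $\boldsymbol{\Gamma}$ has no zeros). Writing $\rho_j(z):=\boldsymbol{\Gamma}^{(j)}(z)/\boldsymbol{\Gamma}(z)$, this becomes
\[
\sum_{|\lambda|=p}a_\lambda(\gamma(z))\,\rho_n(z)^{\lambda_1}\rho_{\ell n}(z)^{\lambda_2}=-\sum_{p'>p}\boldsymbol{\Gamma}(z)^{p'-p}\sum_{|\lambda|=p'}a_\lambda(\gamma(z))\,\rho_n(z)^{\lambda_1}\rho_{\ell n}(z)^{\lambda_2},
\]
whose right-hand side tends to $0$ on $\Re z=x$ as $|y|\to\infty$, because the factors $\boldsymbol{\Gamma}(x+\mathtt{i}y)^{p'-p}$ beat any polylogarithmic growth of the $\rho$'s. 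Hence the left-hand side does so too. Using the asymptotics $\rho_n(x+\mathtt{i}y)\sim(\log|y|)^n$ and $\rho_{\ell n}(x+\mathtt{i}y)\sim(\log|y|)^{\ell n}$, and choosing $y_k\to\infty$ with $\gamma(y_k)\to\xi$ via Proposition~\ref{prop1.2}, I peel off contributions in decreasing order of $|\lambda|^*=\lambda_1+\ell\lambda_2$, producing polynomial identities on the $a_\lambda$'s; reassembling these with the corresponding monomials in $\boldsymbol{\Gamma},\boldsymbol{\Gamma}^{(n)},\boldsymbol{\Gamma}^{(\ell n)}$ then gives the desired $P_p(\ldots)(z)\equiv 0$.

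The main technical obstacle is this reassembly: when several indices $\lambda$ with $|\lambda|=p$ share the same value of $|\lambda|^*$ (as can happen for small $\ell$), the leading-order asymptotics of $\rho_n,\rho_{\ell n}$ yield only sum-type constraints on the $a_\lambda$'s, not their individual vanishing — which is precisely why the claim is stated as vanishing of the evaluation rather than of $P_p$ itself. Translating those sum constraints into the functional identity $P_p(\ldots)(z)\equiv 0$ requires exploiting the specific algebraic form of the monomials in $\boldsymbol{\Gamma},\boldsymbol{\Gamma}^{(n)},\boldsymbol{\Gamma}^{(\ell n)}$ (or, equivalently, the finer expansion of $\rho_{\ell n}-\rho_n^\ell$ through the digamma function and its derivatives), and this is where the bulk of the detailed work sits.
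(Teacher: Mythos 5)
Your overall frame (exponential decay of $\boldsymbol{\Gamma}$ along $\Re z=\frac34$ separating the homogeneous degrees $p$, plus Voronin's theorem to control the $\boldsymbol{\zeta}$-coefficients) is the same as the paper's, and your base case $p=0$ is sound. But the inductive step has a genuine gap, one you yourself flag in your last paragraph and then leave unfilled: when two indices $\lambda\neq\mu$ with $\n{\lambda}=\n{\mu}=p$ satisfy $\lambda_1+\ell\lambda_2=\mu_1+\ell\mu_2=q$, the monomials $\rho_n^{\lambda_1}\rho_{\ell n}^{\lambda_2}$ and $\rho_n^{\mu_1}\rho_{\ell n}^{\mu_2}$ have the \emph{same} leading asymptotics $(\log z)^{nq}(1+o(1))$, so your ``peeling in decreasing order of $\n{\lambda}^\star$'' only yields $\sum_{\lambda\in\Lambda_p\cap\Lambda^\star_q}a_\lambda(\xi)=0$ for all $\xi$. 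That sum-type constraint does not imply that the actual evaluation $\sum_\lambda a_\lambda(\vec{\boldsymbol{\zeta}})\rho_n^{\lambda_1}\rho_{\ell n}^{\lambda_2}$ vanishes, since the individual monomials differ at subleading order; so the argument does not close. This degeneracy is not a corner case — it is the generic situation for the triple $(\boldsymbol{\Gamma},\boldsymbol{\Gamma}^{(n)},\boldsymbol{\Gamma}^{(\ell n)})$ and is the entire point of the paper allowing a third derivative of $\boldsymbol{\Gamma}$.

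What is missing is precisely the quantitative second-order expansion that the paper builds. Setting $f=\boldsymbol{\Gamma}'/\boldsymbol{\Gamma}$, the recursion $\boldsymbol{\Gamma}^{(n+1)}/\boldsymbol{\Gamma}=(\boldsymbol{\Gamma}^{(n)}/\boldsymbol{\Gamma})'+(\boldsymbol{\Gamma}^{(n)}/\boldsymbol{\Gamma})f$ gives $\boldsymbol{\Gamma}^{(n)}/\boldsymbol{\Gamma}=f^n\big[1+\frac{f'}{f^2}(c_n+\varepsilon_n)\big]$ with $c_n=\frac{n(n-1)}{2}$ and $\varepsilon_n=O\big(\frac{1}{z\log z}\big)$; writing $F=\boldsymbol{\Gamma}^{(n)}/\boldsymbol{\Gamma}$ one then gets $\boldsymbol{\Gamma}^{(\ell n)}/\boldsymbol{\Gamma}=F^\ell(1+H)$ with $H=\frac{f'}{f^2}G$ and $G\to\frac{\ell(\ell-1)n^2}{2}\neq0$ for $\ell\geq2$. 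Re-expanding each group with fixed $q$ in powers of $H\asymp\frac{1}{z(\log z)^2}$ produces new coefficients $b_{q,r}$ (triangularly equivalent to the $a_\lambda$'s), and along a Voronin sequence chosen so that the \emph{first} nonzero $b_{q_0,r_0}$ satisfies $\n{b_{q_0,r_0}(\vec{\boldsymbol{\zeta}})(z_k)}\geq1$ while all others stay bounded, the single term $F^{q_0}b_{q_0,r_0}H^{r_0}$ strictly dominates, giving the lower bound \eqref{Eq2.14} and hence the contradiction. Without carrying out this finer expansion (or an equivalent separation of the degenerate monomials), your inductive step establishes only the weaker sum constraints, not the claim. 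The cases $n=0$ and $\ell\leq1$, where the variables collapse, also need the separate reduction to \cite{LY1} that the paper records.
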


\begin{proof}
Suppose $p_0$ is the smallest index among $\left\{0,1,\ldots,L\right\}$ such that
\begin{equation}
P_{p_0}(\boldsymbol{\zeta},\boldsymbol{\zeta}',\ldots,\boldsymbol{\zeta}^{(m)};\boldsymbol{\Gamma},\boldsymbol{\Gamma}^{(n)},\boldsymbol{\Gamma}^{(\ell n)})(z)\not\equiv0.\nonumber
\end{equation}
Then, we have
\begin{equation}\label{Eq2.3}
\begin{aligned}
&P_{p_0}\Big(\boldsymbol{\zeta},\boldsymbol{\zeta}',\ldots,\boldsymbol{\zeta}^{(m)};1,\frac{\boldsymbol{\Gamma}^{(n)}}{\boldsymbol{\Gamma}},\frac{\boldsymbol{\Gamma}^{(\ell n)}}
{\boldsymbol{\Gamma}}\Big)(z)\\
=\,&\frac{P_{p_0}(\boldsymbol{\zeta},\boldsymbol{\zeta}',\ldots,\boldsymbol{\zeta}^{(m)};\boldsymbol{\Gamma},\boldsymbol{\Gamma}^{(n)},\boldsymbol{\Gamma}^{(\ell n)})(z)}
{\boldsymbol{\Gamma}^{p_0}(z)}\not\equiv0.
\end{aligned}
\end{equation}

Define the digamma function $f:=\frac{\boldsymbol{\Gamma}'}{\boldsymbol{\Gamma}}$, and introduce inductively
\begin{equation}
{\small\begin{aligned}
\frac{\boldsymbol{\Gamma}'}{\boldsymbol{\Gamma}}&=f\\
&=f\Big[1+\frac{f'}{f^2}(c_1+\varepsilon_1)\Big]~\text{for}~c_1=0~\text{and}~\varepsilon_1=0,\\
\frac{\boldsymbol{\Gamma}''}{\boldsymbol{\Gamma}}&=\Big(\frac{\boldsymbol{\Gamma}'}{\boldsymbol{\Gamma}}\Big)'+\Big(\frac{\boldsymbol{\Gamma}'}{\boldsymbol{\Gamma}}\Big)^2=f'+f^2\\
&=f^2\Big[1+\frac{f'}{f^2}(c_2+\varepsilon_2)\Big]~\text{for}~c_2=1~\text{and}~\varepsilon_2=0,\\
\frac{\boldsymbol{\Gamma}'''}{\boldsymbol{\Gamma}}&=\Big(\frac{\boldsymbol{\Gamma}''}{\boldsymbol{\Gamma}}\Big)'
+\frac{\boldsymbol{\Gamma}''}{\boldsymbol{\Gamma}}\cdot\frac{\boldsymbol{\Gamma}'}{\boldsymbol{\Gamma}}=f''+3ff'+f^3\\
&=f^3\Big[1+\frac{f'}{f^2}(c_3+\varepsilon_3)\Big]~\text{for}~c_3=3~\text{and}~\varepsilon_3=\frac{f''}{ff'},\\
\frac{\boldsymbol{\Gamma}^{(4)}}{\boldsymbol{\Gamma}}&=\Big(\frac{\boldsymbol{\Gamma}'''}{\boldsymbol{\Gamma}}\Big)'
+\frac{\boldsymbol{\Gamma}'''}{\boldsymbol{\Gamma}}\cdot\frac{\boldsymbol{\Gamma}'}{\boldsymbol{\Gamma}}=f'''+4ff''+3(f')^2+6f^2f'+f^4\\
&=f^4\Big[1+\frac{f'}{f^2}(c_4+\varepsilon_4)\Big]~\text{for}~c_4=6~\text{and}~\varepsilon_4=\frac{f'''}{f^2f'}+4\frac{f''}{ff'}+3\frac{f'}{f^2},\\
\frac{\boldsymbol{\Gamma}^{(5)}}{\boldsymbol{\Gamma}}&=\Big(\frac{\boldsymbol{\Gamma}^{(4)}}{\boldsymbol{\Gamma}}\Big)'
+\frac{\boldsymbol{\Gamma}^{(4)}}{\boldsymbol{\Gamma}}\cdot\frac{\boldsymbol{\Gamma}'}{\boldsymbol{\Gamma}}=f^{(4)}+5ff'''+10f'f''+10f^2f''+15f(f')^2+10f^3f'+f^5\\
&=f^5\Big[1+\frac{f'}{f^2}(c_5+\varepsilon_5)\Big]~\text{for}~c_5=10~\text{and}~
\varepsilon_5=\frac{f^{(4)}}{f^3f'}+5\frac{f'''}{f^2f'}+10\frac{f''}{ff'}+10\frac{f''}{f^3}+15\frac{f'}{f^2},\\
&\vdots\\
\frac{\boldsymbol{\Gamma}^{(n)}}{\boldsymbol{\Gamma}}&=f^n\Big[1+\frac{f'}{f^2}(c_n+\varepsilon_n)\Big]~\text{upon assumption, and then we can deduce that}\\
\frac{\boldsymbol{\Gamma}^{(n+1)}}{\boldsymbol{\Gamma}}&=\Big(\frac{\boldsymbol{\Gamma}^{(n)}}{\boldsymbol{\Gamma}}\Big)'
+\frac{\boldsymbol{\Gamma}^{(n)}}{\boldsymbol{\Gamma}}\cdot\frac{\boldsymbol{\Gamma}'}{\boldsymbol{\Gamma}}\\
&=f^{n+1}+f^{n-1}f'(c_n+n+\varepsilon_n)+f^{n-2}f'\varepsilon_n'+[(n-2)f^{n-3}(f')^2+f^{n-2}f''](c_n+\varepsilon_n)\\
&=f^{n+1}\Big[1+\frac{f'}{f^2}(c_{n+1}+\varepsilon_{n+1})\Big]~\text{for}~c_{n+1}=c_n+n=\frac{n(n+1)}{2}~\text{and}\\ &\hspace{49.76mm}\varepsilon_{n+1}=\varepsilon_n+\frac{\varepsilon_n'}{f}+\Big[(n-2)\frac{f'}{f^2}+\frac{f''}{ff'}\Big](c_n+\varepsilon_n).\nonumber
\end{aligned}}
\end{equation}

It is noteworthy that when $n=0$, one has $P(u_0,u_1,\ldots,u_m;v_0)\equiv0$, or when $\ell=1$, one has $P(u_0,u_1,\ldots,u_m;v_0,v_1)\equiv0$, provided for $z\in\mathbf{C}$, we have
\begin{equation}
P(\boldsymbol{\zeta},\boldsymbol{\zeta}',\ldots,\boldsymbol{\zeta}^{(m)};\boldsymbol{\Gamma})(z)\equiv0~\text{or}~
P(\boldsymbol{\zeta},\boldsymbol{\zeta}',\ldots,\boldsymbol{\zeta}^{(m)};\boldsymbol{\Gamma},\boldsymbol{\Gamma}^{(n)})(z)\equiv0,\nonumber
\end{equation}
which follows readily from the analysis in \cite[Corollary]{LY1}.
So, in our subsequent discussions, we without loss of generality always assume $\ell\geq2$ and $n\geq1$.

Define
\begin{equation}\label{Eq2.4}
F:=\frac{\boldsymbol{\Gamma}^{(n)}}{\boldsymbol{\Gamma}}=f^n\Big[1+\frac{f'}{f^2}(c_n+\varepsilon_n)\Big]
\end{equation}
and then rewrite
\begin{equation}\label{Eq2.5}
\frac{\boldsymbol{\Gamma}^{(\ell n)}}{\boldsymbol{\Gamma}}=f^{\ell n}\Big[1+\frac{f'}{f^2}(c_{\ell n}+\varepsilon_{\ell n})\Big]=F^\ell\Big(1+\frac{f'}{f^2}G\Big).
\end{equation}
Here, we set
\begin{equation}\label{Eq2.6}
G:=\Big[\frac{\boldsymbol{\Gamma}^{(\ell n)}}{\boldsymbol{\Gamma}}\frac{\boldsymbol{\Gamma}^\ell}{(\boldsymbol{\Gamma}^{(n)})^\ell}-1\Big]
\frac{(\boldsymbol{\Gamma}')^2}{\boldsymbol{\Gamma}\boldsymbol{\Gamma}''-(\boldsymbol{\Gamma}')^2},
\end{equation}
a meromorphic function in $\mathbf{C}$ satisfying
\begin{equation}
\begin{aligned}
&1+\frac{f'}{f^2}G\\
=\,&\Big[1+\frac{f'}{f^2}(c_{\ell n}+\varepsilon_{\ell n})\Big]\bigg/\Big[1+\frac{f'}{f^2}(c_n+\varepsilon_n)\Big]^\ell\\
=\,&1+[(c_{\ell n}+\varepsilon_{\ell n})-\ell(c_n+\varepsilon_n)]\frac{f'}{f^2}\\
&+\sum_{j=2}^{+\infty}\left\{(c_n+\varepsilon_n)^{j-1}\Big[(c_{\ell n}+\varepsilon_{\ell n})
\binom{-\ell}{j-1}+(c_n+\varepsilon_n)\binom{-\ell}{j}\Big]\right\}\Big(\frac{f'}{f^2}\Big)^j.\nonumber
\end{aligned}
\end{equation}
So, $G$ has the power series representation in terms of $(c_n+\varepsilon_n)\frac{f'}{f^2}$ as follows:
\begin{equation}\label{Eq2.7}
G=\sum_{j=0}^{+\infty}\left\{\Big[(c_{\ell n}+\varepsilon_{\ell n})
\binom{-\ell}{j}+(c_n+\varepsilon_n)\binom{-\ell}{j+1}\Big]\right\}\Big[(c_n+\varepsilon_n)\frac{f'}{f^2}\Big]^j.
\end{equation}

Now, using \eqref{Eq2.1}, \eqref{Eq2.3}, \eqref{Eq2.4} and \eqref{Eq2.5}, for $z\in\mathbf{C}$, we get
\begin{equation}\label{Eq2.8}
\begin{aligned}
&P_{p_0}\Big(\boldsymbol{\zeta},\boldsymbol{\zeta}',\ldots,\boldsymbol{\zeta}^{(m)};1,\frac{\boldsymbol{\Gamma}^{(n)}}{\boldsymbol{\Gamma}},\frac{\boldsymbol{\Gamma}^{(\ell n)}} {\boldsymbol{\Gamma}}\Big)(z)\\
=&\sum_{q=0}^{M_{p_0}}\sum_{\lambda\in\Lambda_{p_0}\cap\Lambda^\star_q}a_\lambda(\boldsymbol{\zeta},\boldsymbol{\zeta}',\ldots,\boldsymbol{\zeta}^{(m)})(z)
\Big(\frac{\boldsymbol{\Gamma}^{(n)}}{\boldsymbol{\Gamma}}\Big)^{\lambda_1}(z)\Big(\frac{\boldsymbol{\Gamma}^{(\ell n)}}{\boldsymbol{\Gamma}}\Big)^{\lambda_2}(z)\\
=&\sum_{q=0}^{M_{p_0}}\sum_{\lambda\in\Lambda_{p_0}\cap\Lambda^\star_q}a_\lambda(\boldsymbol{\zeta},\boldsymbol{\zeta}',\ldots,\boldsymbol{\zeta}^{(m)})(z)
F^{\lambda_1}(z)F^{\ell\lambda_2}(z)\Big(1+\frac{f'}{f^2}G\Big)^{\lambda_2}(z)\\
=&\sum_{q=0}^{M_{p_0}}F^q(z)\sum_{\lambda\in\Lambda_{p_0}\cap\Lambda^\star_q}a_\lambda(\boldsymbol{\zeta},\boldsymbol{\zeta}',\ldots,\boldsymbol{\zeta}^{(m)})(z)
\Big(1+\frac{f'}{f^2}G\Big)^{\lambda_2}(z).
\end{aligned}
\end{equation}
Here, for some $q$, a term with $\n{\lambda}=p_0$ and $\n{\lambda}^\star=q$ may not appear in \eqref{Eq2.8}; if so, one simply deems the coefficient $a_\lambda(u_0,u_1,\ldots,u_m)$ affiliated with this term as zero.

Recall $\lambda_0,\lambda_1,\lambda_2\geq0$ are nonnegative integers.
For each $\lambda=(\lambda_0,\lambda_1,\lambda_2)\in\Lambda_{p_0}\cap\Lambda^\star_q$, one has $\lambda_0=p_0-q+(\ell-1)\lambda_2$ and $\lambda_1=q-\ell\lambda_2$; thus, for fixed $p_0,q$, $\lambda$ is uniquely determined by $\lambda_2$, and vice versa.
Denote the largest $\lambda_2$ by $N_{p_0}$.
Then, \eqref{Eq2.8} can be rewritten as
\begin{equation}\label{Eq2.9}
\begin{aligned}
&P_{p_0}\Big(\boldsymbol{\zeta},\boldsymbol{\zeta}',\ldots,\boldsymbol{\zeta}^{(m)};1,\frac{\boldsymbol{\Gamma}^{(n)}}{\boldsymbol{\Gamma}},\frac{\boldsymbol{\Gamma}^{(\ell n)}} {\boldsymbol{\Gamma}}\Big)(z)\\
=&\sum_{q=0}^{M_{p_0}}F^q(z)\sum_{r=0}^{N_{p_0}}a_{q,r}(\boldsymbol{\zeta},\boldsymbol{\zeta}',\ldots,\boldsymbol{\zeta}^{(m)})(z)\Big(1+\frac{f'}{f^2}G\Big)^r(z).
\end{aligned}
\end{equation}
Here, we set $a_{q,r}(u_0,u_1,\ldots,u_m):=a_\lambda(u_0,u_1,\ldots,u_m)$ for $\lambda=(p_0-q+(\ell-1)r,q-\ell r,r)\in\Lambda$ when applicable; otherwise, we simply set $a_{q,r}(u_0,u_1,\ldots,u_m)$ to be zero.

Define $H:=\frac{f'}{f^2}G$ and expand $(1+H)^r$ to observe that
\begin{equation}
\sum_{r=0}^{N_{p_0}}a_{q,r}(\boldsymbol{\zeta},\boldsymbol{\zeta}',\ldots,\boldsymbol{\zeta}^{(m)})(z)(1+H)^r(z)
=\sum_{r=0}^{N_{p_0}}b_{q,r}(\boldsymbol{\zeta},\boldsymbol{\zeta}',\ldots,\boldsymbol{\zeta}^{(m)})(z)H^r(z).\nonumber
\end{equation}
Here, for fixed $p_0,q$, the polynomials $a_{q,r}(u),b_{q,r}(u)$ satisfy the relations as follows:
\begin{equation}\label{Eq2.10}
\begin{aligned}
b_{q,N_{p_0}}(u)&=a_{q,N_{p_0}}(u),\\
b_{q,N_{p_0}-1}(u)&=a_{q,N_{p_0}-1}(u)+\binom{N_{p_0}}{N_{p_0}-1}a_{q,N_{p_0}}(u),\\
b_{q,N_{p_0}-2}(u)&=a_{q,N_{p_0}-2}(u)+\binom{N_{p_0}-1}{N_{p_0}-2}a_{q,N_{p_0}-1}(u)+\binom{N_{p_0}}{N_{p_0}-2}a_{q,N_{p_0}}(u),\\
&\vdots\\
b_{q,0}(u)&=a_{q,0}(u)+a_{q,1}(u)+\cdots+a_{q,N_{p_0}-1}(u)+a_{q,N_{p_0}}(u),
\end{aligned}
\end{equation}
from which one sees $\left\{a_{q,r}(u)\right\}$ and $\left\{b_{q,r}(u)\right\}$ are mutually uniquely representable of each other, with $a_{q,r}(u):=a_{q,r}(u_0,u_1,\ldots,u_m)$ and $b_{q,r}(u):=b_{q,r}(u_0,u_1,\ldots,u_m)$ for brevity.

Summarizing all the preceding discussions leads to
\begin{equation}\label{Eq2.11}
\begin{aligned}
&P_{p_0}\Big(\boldsymbol{\zeta},\boldsymbol{\zeta}',\ldots,\boldsymbol{\zeta}^{(m)};1,\frac{\boldsymbol{\Gamma}^{(n)}}{\boldsymbol{\Gamma}},\frac{\boldsymbol{\Gamma}^{(\ell n)}} {\boldsymbol{\Gamma}}\Big)(z)\\
=\,&F^{M_{p_0}}(z)\Big[b_{M_{p_0},0}(\vec{\boldsymbol{\zeta}})+b_{M_{p_0},1}(\vec{\boldsymbol{\zeta}})H+\cdots+b_{M_{p_0},N_{p_0}}(\vec{\boldsymbol{\zeta}})H^{N_{p_0}}\Big](z)\\
&+F^{M_{p_0}-1}(z)\Big[b_{M_{p_0}-1,0}(\vec{\boldsymbol{\zeta}})+b_{M_{p_0}-1,1}(\vec{\boldsymbol{\zeta}})H+\cdots+b_{M_{p_0}-1,N_{p_0}}(\vec{\boldsymbol{\zeta}})H^{N_{p_0}}\Big](z)\\
&+\cdots+F(z)\Big[b_{1,0}(\vec{\boldsymbol{\zeta}})+b_{1,1}(\vec{\boldsymbol{\zeta}})H+\cdots+b_{1,N_{p_0}}(\vec{\boldsymbol{\zeta}})H^{N_{p_0}}\Big](z)\\
&+\Big[b_{0,0}(\vec{\boldsymbol{\zeta}})+b_{0,1}(\vec{\boldsymbol{\zeta}})H+\cdots+b_{0,N_{p_0}}(\vec{\boldsymbol{\zeta}})H^{N_{p_0}}\Big](z),
\end{aligned}
\end{equation}
with $(\vec{\boldsymbol{\zeta}})$ being the abbreviation for the vector function $(\boldsymbol{\zeta},\boldsymbol{\zeta}',\ldots,\boldsymbol{\zeta}^{(m)})$.

Recall \eqref{Eq2.3}.
Suppose $b_{q_0,r_0}(u_0,u_1,\ldots,u_m)$ is the first nonzero term in the ordered sequence of polynomials over $\mathbf{C}^{m+1}$ as follows:
\begin{equation}
\begin{aligned}
b_{M_{p_0},0}(u),b_{M_{p_0}-1,0}(u),&\ldots,b_{1,0}(u),b_{0,0}(u),\\
b_{M_{p_0},1}(u),b_{M_{p_0}-1,1}(u),&\ldots,b_{1,1}(u),b_{0,1}(u),\\
&\adots\\
b_{M_{p_0},N_{p_0}-1}(u),b_{M_{p_0}-1,N_{p_0}-1}(u),&\ldots,b_{1,N_{p_0}-1}(u),b_{0,N_{p_0}-1}(u),\\
b_{M_{p_0},N_{p_0}}(u),b_{M_{p_0}-1,N_{p_0}}(u),&\ldots,b_{1,N_{p_0}}(u),b_{0,N_{p_0}}(u).\nonumber
\end{aligned}
\end{equation}

In view of the finiteness of indices, we certainly can find a constant $C_0>1$ and a (sufficiently small) subset $\mathbf{\Omega}$ of $\mathbf{C}^{m+1}$ such that, after appropriate rescaling if necessary,
\begin{equation}
\n{b_{q_0,r_0}(u_0,u_1,\ldots,u_m)}\geq1~\text{and}~\n{b_{q,r}(u_0,u_1,\ldots,u_m)}\leq C_0\nonumber
\end{equation}
uniformly for all $u:=(u_0,u_1,\ldots,u_m)\in\mathbf{\Omega}\varsubsetneq\mathbf{C}^{m+1}$ and for each $0\leq q\leq M_{p_0}$ and $0\leq r\leq N_{p_0}$.
Then, using Proposition \ref{prop1.2}, there exists a sequence of real numbers $\left\{y_k\right\}_{k=1}^{+\infty}$ with $\n{y_k}\to+\infty$ such that $\gamma(y_k)\in\mathbf{\Omega}\varsubsetneq\mathbf{C}^{m+1}$ when $x=\frac{3}{4}$.
So, for $z_k:=\frac{3}{4}+\mathtt{i}y_k\in\mathbf{C}$, one has
\begin{equation}\label{Eq2.12}
\n{b_{q_0,r_0}(\boldsymbol{\zeta},\boldsymbol{\zeta}',\ldots,\boldsymbol{\zeta}^{(m)})(z_k)}\geq1~\text{and}~
\n{b_{q,r}(\boldsymbol{\zeta},\boldsymbol{\zeta}',\ldots,\boldsymbol{\zeta}^{(m)})(z_k)}\leq C_0
\end{equation}
uniformly for all indices $q=0,1,\ldots,M_{p_0}$ and $r=0,1,\ldots,N_{p_0}$.

Next, a classical result of Stirling \cite[p.151]{Ti} says
\begin{equation}
\log\boldsymbol{\Gamma}(z)=\Big(z-\frac{1}{2}\Big)\log z-z+\frac{1}{2}\log(2\pi)+\int_0^{+\infty}\frac{[t]-t+\frac{1}{2}}{t+z}dt,\nonumber
\end{equation}
which combined with a version of the Lebesgue's convergence theorem leads to
\begin{equation}
f(z)=\frac{\boldsymbol{\Gamma}'}{\boldsymbol{\Gamma}}(z)=\log z-\frac{1}{2z}-\int_0^{+\infty}\frac{[t]-t+\frac{1}{2}}{(t+z)^2}dt,\nonumber
\end{equation}
so that inductively for $n=1,2,\ldots$, one deduces
\begin{equation}
\begin{aligned}
f^{(n)}(z)&=(-1)^{n-1}\left\{\frac{(n-1)!}{z^n}+\frac{n!}{2z^{n+1}}+(n+1)!\int_0^{+\infty}\frac{[t]-t+\frac{1}{2}}{(t+z)^{n+2}}dt\right\}.\nonumber
\end{aligned}
\end{equation}
It is thus quite straightforward to verify
\begin{equation}
f(z)=\log z+o(1)=\log z(1+o(1))\nonumber
\end{equation}
and
\begin{equation}
f^{(n)}(z)=\frac{(-1)^{n-1}(n-1)!}{z^n}(1+o(1))\nonumber
\end{equation}
for every $n=1,2,\ldots$, and hence
\begin{equation}\label{Eq2.13}
\frac{f'}{f^2}(z)=\frac{1}{z(\log z)^2}(1+o(1))~\text{and}~\frac{f''}{ff'}(z)=-\frac{1}{z\log z}(1+o(1)),
\end{equation}
all uniformly on $\mathbf{D}:=\left\{z:-\frac{5\pi}{6}\leq\arg z\leq\frac{5\pi}{6}\right\}$, provided $\n{z}$ is sufficiently large.

Now, recall $c_n=\frac{n(n-1)}{2}$ and $\varepsilon_1=\varepsilon_2=0$.
One employs \eqref{Eq2.13} to see that
\begin{equation}
{\small\begin{aligned}
\varepsilon_3&=\frac{f''}{ff'}=-\frac{1}{z\log z}(1+o(1)),\\
\varepsilon_4&=\varepsilon_3+\frac{\varepsilon_3'}{f}+\Big(\frac{f'}{f^2}+\frac{f''}{ff'}\Big)(c_3+\varepsilon_3)=-\frac{4}{z\log z}(1+o(1)),\\
\varepsilon_5&=\varepsilon_4+\frac{\varepsilon_4'}{f}+\Big(2\frac{f'}{f^2}+\frac{f''}{ff'}\Big)(c_4+\varepsilon_4)=-\frac{10}{z\log z}(1+o(1)),\\
&\vdots\\
\varepsilon_n&=-\frac{n(n-1)(n-2)}{6}\frac{1}{z\log z}(1+o(1))~\text{upon assumption, and thus}\\
\varepsilon_{n+1}&=\varepsilon_n+\frac{\varepsilon_n'}{f}+\Big[(n-2)\frac{f'}{f^2}+\frac{f''}{ff'}\Big](c_n+\varepsilon_n)
=-\Big[\frac{n(n-1)(n-2)}{6}+c_n\Big]\frac{1}{z\log z}(1+o(1))\\
&=-\Big[\frac{n(n-1)(n-2)}{6}+\frac{n(n-1)}{2}\Big]\frac{1}{z\log z}(1+o(1))=-\frac{n(n^2-1)}{6}\frac{1}{z\log z}(1+o(1)),\nonumber
\end{aligned}}
\end{equation}
which further implies, uniformly on $\mathbf{D}$ for sufficiently large $\n{z}$, via \eqref{Eq2.6} and \eqref{Eq2.7},
\begin{equation}
G(z)=\frac{\ell(\ell-1)n^2}{2}\Big(1-\frac{\ell n+n-3}{3}\frac{1}{z\log z}\Big)+O\Big(\frac{1}{z(\log z)^2}\Big)=\frac{\ell(\ell-1)n^2}{2}(1+o(1)).\nonumber
\end{equation}

From now on, we shall focus entirely on $\left\{z_k\right\}^{+\infty}_{k=1}\varsubsetneq\mathbf{D}$ and observe that
\begin{equation}
F^q(z_k)b_{q,r}(\boldsymbol{\zeta},\boldsymbol{\zeta}',\ldots,\boldsymbol{\zeta}^{(m)})(z_k)H^r(z_k)\nonumber
\end{equation}
is equal to, in view of $H(z_k)=\frac{f'}{f^2}(z_k)G(z_k)=\frac{\ell(\ell-1)n^2}{2z_k(\log z_k)^2}(1+o(1))$,
\begin{equation}
\Big[\frac{\ell(\ell-1)n^2}{2}\Big]^rb_{q,r}(\boldsymbol{\zeta},\boldsymbol{\zeta}',\ldots,\boldsymbol{\zeta}^{(m)})(z_k)\frac{(\log z_k)^{nq-2r}}{(z_k)^r}(1+o(1))\nonumber
\end{equation}
when $k\to+\infty$, where the indices either satisfy $r=r_0$ with $0\leq q\leq q_0$ or satisfy $r_0<r\leq N_{p_0}$ with $0\leq q\leq M_{p_0}$.
As a result, the term
\begin{equation}
F^{q_0}(z_k)b_{q_0,r_0}(\boldsymbol{\zeta},\boldsymbol{\zeta}',\ldots,\boldsymbol{\zeta}^{(m)})(z_k)H^{r_0}(z_k),\nonumber
\end{equation}
among all the possible terms appearing in \eqref{Eq2.11}, dominates in growth when $k\to+\infty$.
In fact, for sufficiently large $k$, if $r=r_0$ with $0\leq q<q_0$, one derives
\begin{equation}
\frac{\n{\log z_k}^{nq}}{\n{z_k}^{r_0}\n{\log z_k}^{2r_0}}\ll\frac{\n{\log z_k}^{nq_0}}{\n{z_k}^{r_0}\n{\log z_k}^{2r_0}},\nonumber
\end{equation}
while if $r_0<r\leq N_{p_0}$ with $0\leq q\leq M_{p_0}$, one derives
\begin{equation}
\begin{aligned}
\n{\log z_k}^{nq}&\leq\n{\log z_k}^{nM_{p_0}+nq_0}\\
\n{z_k}^r\n{\log z_k}^{2r}&\gg\n{z_k}^{r_0}\n{\log z_k}^{nM_{p_0}+2r_0}.\nonumber
\end{aligned}
\end{equation}
So, since $\ell\geq2$ and $n\geq1$, one sees from \eqref{Eq2.11} and \eqref{Eq2.12}, as $k\to+\infty$, that
\begin{equation}\label{Eq2.14}
\n{P_{p_0}\Big(\boldsymbol{\zeta},\boldsymbol{\zeta}',\ldots,\boldsymbol{\zeta}^{(m)};1,\frac{\boldsymbol{\Gamma}^{(n)}}{\boldsymbol{\Gamma}},\frac{\boldsymbol{\Gamma}^{(\ell n)}} {\boldsymbol{\Gamma}}\Big)(z_k)}\geq\frac{1}{3}\frac{\n{\log z_k}^{nq_0-2r_0}}{\n{z_k}^{r_0}}.
\end{equation}

When $p_0=L$, then, by the definition of $p_0$ and \eqref{Eq2.14}, it yields that
\begin{equation}
\begin{aligned}
&P(\boldsymbol{\zeta},\boldsymbol{\zeta}',\ldots,\boldsymbol{\zeta}^{(m)};\boldsymbol{\Gamma},\boldsymbol{\Gamma}^{(n)},\boldsymbol{\Gamma}^{(\ell n)})(z_k)\\
=\,&P_L(\boldsymbol{\zeta},\boldsymbol{\zeta}',\ldots,\boldsymbol{\zeta}^{(m)};\boldsymbol{\Gamma},\boldsymbol{\Gamma}^{(n)},\boldsymbol{\Gamma}^{(\ell n)})(z_k)\\
=\,&\boldsymbol{\Gamma}^L(z_k)P_L\Big(\boldsymbol{\zeta},\boldsymbol{\zeta}',\ldots,\boldsymbol{\zeta}^{(m)}
;1,\frac{\boldsymbol{\Gamma}^{(n)}}{\boldsymbol{\Gamma}},\frac{\boldsymbol{\Gamma}^{(\ell n)}} {\boldsymbol{\Gamma}}\Big)(z_k)\neq0\nonumber
\end{aligned}
\end{equation}
for sufficiently large $k$, which contradicts our assumption \eqref{Eq1.2}.

When $p_0<L$, by virtue of another result of Stirling \cite[p.151]{Ti} which says
\begin{equation}
\n{\boldsymbol{\Gamma}\Big(\frac{3}{4}+\mathtt{i}y\Big)}=e^{-\frac{1}{2}\pi\n{y}}\n{y}^{\frac{1}{4}}\sqrt{2\pi}(1+o(1))\nonumber
\end{equation}
as $\n{y}\to+\infty$, along with $\frac{\n{\log z}^\imath}{\n{z}^\jmath\n{\log z}^{2\jmath}}\to0$ as $\n{z}\to+\infty$ if $\jmath>0$ for nonnegative integers $\imath,\jmath$, one easily observes, in view of \eqref{Eq2.1}, \eqref{Eq2.2}, \eqref{Eq2.3}, \eqref{Eq2.12} and \eqref{Eq2.14}, that
\begin{equation}\label{Eq2.15}
\begin{aligned}
&\n{\frac{P(\boldsymbol{\zeta},\boldsymbol{\zeta}',\ldots,\boldsymbol{\zeta}^{(m)};\boldsymbol{\Gamma},\boldsymbol{\Gamma}^{(n)},\boldsymbol{\Gamma}^{(\ell n)})(z_k)} {\boldsymbol{\Gamma}^L(z_k)}}\\
=&\n{\sum_{p=p_0}^L\frac{1}{\boldsymbol{\Gamma}^{L-p}(z_k)}P_p\Big(\boldsymbol{\zeta},\boldsymbol{\zeta}',\ldots,\boldsymbol{\zeta}^{(m)};
1,\frac{\boldsymbol{\Gamma}^{(n)}}{\boldsymbol{\Gamma}},\frac{\boldsymbol{\Gamma}^{(\ell n)}}{\boldsymbol{\Gamma}}\Big)(z_k)}\\
\geq&\,\frac{1}{6}\exp\Big(\frac{L-p_0}{2}\pi\n{y_k}\Big)(\n{y_k}^{\frac{1}{4}}\sqrt{2\pi})^{p_0-L}\frac{\n{\log z_k}^{nq_0-2r_0}}{\n{z_k}^{r_0}}\\
&-C_1\exp\Big(\frac{L-p_0-1}{2}\pi\n{y_k}\Big)(\n{y_k}^{\frac{1}{4}}\sqrt{2\pi})^{p_0+1-L}\n{\log z_k}^{C_2}\to+\infty
\end{aligned}
\end{equation}
as $k\to+\infty$ for some constants $C_1,C_2>0$ depending on $C_0,L,\ell,n$.
Hence,
\begin{equation}
P(\boldsymbol{\zeta},\boldsymbol{\zeta}',\ldots,\boldsymbol{\zeta}^{(m)};\boldsymbol{\Gamma},\boldsymbol{\Gamma}^{(n)},\boldsymbol{\Gamma}^{(\ell n)})(z_k)\neq0\nonumber
\end{equation}
for sufficiently large $k$, which again contradicts the hypothesis \eqref{Eq1.2}.
\end{proof}

\begin{claim}\label{clm2.2}
For each $0\leq p\leq L$ and all $z\in\mathbf{C}$, when
\begin{equation}\label{Eq2.16}
P_p(\boldsymbol{\zeta},\boldsymbol{\zeta}',\ldots,\boldsymbol{\zeta}^{(m)};\boldsymbol{\Gamma},\boldsymbol{\Gamma}^{(n)},\boldsymbol{\Gamma}^{(\ell n)})(z)\equiv0,
\end{equation}
then necessarily the polynomial $P_p(u_0,u_1,\ldots,u_m;v_0,v_1,v_2)$ vanishes identically.
\end{claim}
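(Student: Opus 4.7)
The plan is to rerun the dominant-term scheme from the proof of Claim~\ref{clm2.1}, but now inside a single homogeneous piece so that the exponential factors $\boldsymbol{\Gamma}^{L-p}$ no longer appear. Since $P_p$ has total $v$-degree exactly $p$, dividing \eqref{Eq2.16} by $\boldsymbol{\Gamma}^p$ yields
\begin{equation}
P_p\Big(\boldsymbol{\zeta},\boldsymbol{\zeta}',\ldots,\boldsymbol{\zeta}^{(m)};1,\frac{\boldsymbol{\Gamma}^{(n)}}{\boldsymbol{\Gamma}},\frac{\boldsymbol{\Gamma}^{(\ell n)}}{\boldsymbol{\Gamma}}\Big)(z)\equiv 0.\nonumber
\end{equation}
Applying the substitutions \eqref{Eq2.4}--\eqref{Eq2.5} and expanding the binomials $(1+H)^r$ with $H:=\frac{f'}{f^2}G$, exactly as in the derivation of \eqref{Eq2.9}--\eqref{Eq2.11}, I would recast this identity as
\begin{equation}
\sum_{q=0}^{M_p}\sum_{r=0}^{N_p}b_{q,r}(\boldsymbol{\zeta},\boldsymbol{\zeta}',\ldots,\boldsymbol{\zeta}^{(m)})(z)\,F^q(z)\,H^r(z)\equiv 0,\nonumber
\end{equation}
where the polynomials $b_{q,r}(u_0,\ldots,u_m)$ are linked to the original $a_{q,r}$ through the invertible triangular system \eqref{Eq2.10}. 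Thus, once every $b_{q,r}$ is shown to vanish as a polynomial in $u$, inverting \eqref{Eq2.10} forces $a_\lambda\equiv 0$ for every $\lambda\in\Lambda_p$, and hence $P_p\equiv 0$.

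I would then argue by contradiction. Suppose some $b_{q,r}(u)$ is not identically zero, and pick $(q_0,r_0)$ to be the first nonzero index in the two-parameter ordering used in Claim~\ref{clm2.1} (smallest $r$ first, then largest $q$ within that $r$). Because a nonzero polynomial cannot vanish on any open subset of $\mathbf{C}^{m+1}$, after a harmless rescaling there exist a set $\mathbf{\Omega}\varsubsetneq\mathbf{C}^{m+1}$ and a constant $C_0>1$ such that $\n{b_{q_0,r_0}(u)}\geq 1$ and $\n{b_{q,r}(u)}\leq C_0$ uniformly for $u\in\mathbf{\Omega}$ and all admissible $q,r$. Proposition~\ref{prop1.2} at $x=\frac{3}{4}$ then supplies $z_k:=\frac{3}{4}+\mathtt{i}y_k$ with $\n{y_k}\to+\infty$ and $\gamma(y_k)\in\mathbf{\Omega}$, so that
\begin{equation}
\n{b_{q_0,r_0}(\boldsymbol{\zeta},\ldots,\boldsymbol{\zeta}^{(m)})(z_k)}\geq 1,\qquad \n{b_{q,r}(\boldsymbol{\zeta},\ldots,\boldsymbol{\zeta}^{(m)})(z_k)}\leq C_0.\nonumber
\end{equation}

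The dominance estimate then proceeds along the lines of \eqref{Eq2.13}--\eqref{Eq2.14}: the Stirling-based asymptotics give $F(z)\sim(\log z)^n$ and $H(z_k)\sim\frac{\ell(\ell-1)n^2}{2z_k(\log z_k)^2}$, so the term indexed by $(q_0,r_0)$ satisfies
\begin{equation}
\n{b_{q_0,r_0}(\boldsymbol{\zeta},\ldots,\boldsymbol{\zeta}^{(m)})\,F^{q_0}\,H^{r_0}}(z_k)\geq\frac{1}{3}\frac{\n{\log z_k}^{nq_0-2r_0}}{\n{z_k}^{r_0}},\nonumber
\end{equation}
while every other $\n{b_{q,r}\,F^q\,H^r}(z_k)$ is strictly of smaller order on $\{z_k\}$: when $r=r_0$ and $q<q_0$, by the logarithmic factor $\n{\log z_k}^{n(q-q_0)}$; when $r>r_0$, by the polynomial gain $\n{z_k}^{r-r_0}$, which absorbs any logarithmic loss. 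Summing these estimates contradicts the vanishing identity $\sum b_{q,r}F^qH^r\equiv 0$ at $z_k$ for large $k$. Consequently every $b_{q,r}$ is the zero polynomial; the triangular system \eqref{Eq2.10} inverts to give $a_{q,r}\equiv 0$ for every admissible index, and therefore $P_p\equiv 0$.

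The main obstacle is the two-parameter bookkeeping that guarantees the $(q_0,r_0)$-term genuinely dominates all others along the Voronin sequence $\left\{z_k\right\}$; this is precisely what the ordering from Claim~\ref{clm2.1} and the standing assumption $\ell\geq 2$, $n\geq 1$ secure. A welcome simplification compared with Claim~\ref{clm2.1} is that, since only one homogeneous piece is in play, no exponential factors $\n{\boldsymbol{\Gamma}(z_k)}^{L-p}$ enter the estimate, and the comparison reduces to the purely logarithmic and polynomial-in-$\n{z_k}$ growth rates already isolated in \eqref{Eq2.13}.
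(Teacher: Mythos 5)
Your proposal is correct and follows essentially the same route as the paper: divide the homogeneous piece by $\boldsymbol{\Gamma}^p$, pass to the $b_{q,r}F^qH^r$ expansion of \eqref{Eq2.11}, and use the Voronin sequence together with the asymptotics $F\sim(\log z)^n$, $H\sim\frac{\ell(\ell-1)n^2}{2z(\log z)^2}$ to show the leading nonzero coefficient's term dominates, forcing all $b_{q,r}\equiv0$ and hence, via the invertible triangular system \eqref{Eq2.10}, $P_p\equiv0$. The only cosmetic difference is that you select the single first nonzero $b_{q_0,r_0}$ and contradict in one shot (which also subsumes the $p=0$ case that the paper dispatches by citing the Hilbert-problem solutions), whereas the paper eliminates the coefficients iteratively in the same ordering; the underlying dominance estimate is identical.
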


\begin{proof}
When $p=0$, by definition, $P_0(u_0,u_1,\ldots,u_m;v_0,v_1,v_2)$ is a polynomial in $u_0,u_1,\ldots,u_m$ alone; so, from the solutions \cite{MB,Os} to the question posted by Hilbert, $P_0(\boldsymbol{\zeta},\boldsymbol{\zeta}',\ldots,\boldsymbol{\zeta}^{(m)})(z)\equiv0$ leads to $P_0(u_0,u_1,\ldots,u_m)\equiv0$ immediately.

Henceforth, assume $p>0$.
For simplicity, we write $p=p_0$ and use the expression \eqref{Eq2.11} and all its associated notations.
Next, we prove that $b_{q,r}(u_0,u_1,\ldots,u_m)\equiv0$ for every $0\leq q\leq M_{p_0}$ and $0\leq r\leq N_{p_0}$.
To this end, we first show that each one of
\begin{equation}
b_{M_{p_0},0}(u_0,u_1,\ldots,u_m),b_{M_{p_0}-1,0}(u_0,u_1,\ldots,u_m),\ldots,b_{0,0}(u_0,u_1,\ldots,u_m)\nonumber
\end{equation}
must be identically equal to zero.
Let's start with $b_{M_{p_0},0}(u_0,u_1,\ldots,u_m)$, which is a polynomial in $u_0,u_1,\ldots,u_m$, and suppose it doesn't vanish identically.
Then, following what we have done in Claim \ref{clm2.1}, one has \eqref{Eq2.12} (or its analogue for this newly chosen $p_0$).
Among all the terms of $P_{p_0}\big(\boldsymbol{\zeta},\boldsymbol{\zeta}',\ldots,\boldsymbol{\zeta}^{(m)};1,\frac{\boldsymbol{\Gamma}^{(n)}}{\boldsymbol{\Gamma}},\frac{\boldsymbol{\Gamma}^{(\ell n)}} {\boldsymbol{\Gamma}}\big)(z_k)$ as described in \eqref{Eq2.11}, the term
\begin{equation}
F^{M_{p_0}}(z_k)b_{M_{p_0},0}(\boldsymbol{\zeta},\boldsymbol{\zeta}',\ldots,\boldsymbol{\zeta}^{(m)})(z_k)\sim(\log z_k)^{nM_{p_0}}\nonumber
\end{equation}
dominates in growth for large $k$, since $\frac{\n{\log z_k}^\imath}{\n{z_k}^\jmath\n{\log z_k}^{2\jmath}}\to0$ when $k\to+\infty$ if $\jmath>0$ for nonnegative integers $\imath,\jmath$.
Thus, analogous to \eqref{Eq2.14}, we deduce from \eqref{Eq2.11} and \eqref{Eq2.12} that
\begin{equation}
\begin{aligned}
&P_{p_0}(\boldsymbol{\zeta},\boldsymbol{\zeta}',\ldots,\boldsymbol{\zeta}^{(m)};\boldsymbol{\Gamma},\boldsymbol{\Gamma}^{(n)},\boldsymbol{\Gamma}^{(\ell n)})(z_k)\\
=\,&\boldsymbol{\Gamma}^{p_0}(z_k)P_{p_0}\Big(\boldsymbol{\zeta},\boldsymbol{\zeta}',\ldots,\boldsymbol{\zeta}^{(m)};
1,\frac{\boldsymbol{\Gamma}^{(n)}}{\boldsymbol{\Gamma}},\frac{\boldsymbol{\Gamma}^{(\ell n)}}{\boldsymbol{\Gamma}}\Big)(z_k)\neq0\nonumber
\end{aligned}
\end{equation}
for all sufficiently large $k$, which however contradicts the hypothesis \eqref{Eq2.16}.
As a result, we see $b_{M_{p_0},0}(u_0,u_1,\ldots,u_m)\equiv0$.
The next term in queue is $b_{M_{p_0}-1,0}(u_0,u_1,\ldots,u_m)$ with
\begin{equation}
F^{M_{p_0}-1}(z_k)b_{M_{p_0}-1,0}(\boldsymbol{\zeta},\boldsymbol{\zeta}',\ldots,\boldsymbol{\zeta}^{(m)})(z_k)\sim(\log z_k)^{nM_{p_0}-n},\nonumber
\end{equation}
so that one can derive $b_{M_{p_0}-1,0}(u_0,u_1,\ldots,u_m)\equiv0$ in exactly the same manner; repeating this process, $b_{q,0}(u_0,u_1,\ldots,u_m)\equiv0$ follows for every $q=0,1,\ldots,M_{p_0}$.
Next, after the elimination of $H$ in \eqref{Eq2.11}, one can perform the preceding procedure again for
\begin{equation}
b_{M_{p_0},1}(u_0,u_1,\ldots,u_m),b_{M_{p_0}-1,1}(u_0,u_1,\ldots,u_m),\ldots,b_{0,1}(u_0,u_1,\ldots,u_m)\nonumber
\end{equation}
and observe that $b_{q,1}(u_0,u_1,\ldots,u_m)\equiv0$ for every $q=0,1,\ldots,M_{p_0}$.
Continuing like this, one arrives at $b_{q,r}(u_0,u_1,\ldots,u_m)\equiv0$ for all $q=0,1,\ldots,M_{p_0}$ and $r=0,1,\ldots,N_{p_0}$.
Through \eqref{Eq2.9} and \eqref{Eq2.10}, we can finally conclude that $P_{p_0}(u_0,u_1,\ldots,u_m;v_0,v_1,v_2)\equiv0$.
\end{proof}

It follows from \eqref{Eq2.1} and \eqref{Eq2.2} that the proof of Theorem \ref{thm1.1} is a straightforward consequence of Claims \ref{clm2.1} and \ref{clm2.2}, so that \eqref{Eq1.2} indeed leads to $P(u_0,u_1,\ldots,u_m;v_0,v_1,v_2)\equiv0$.

\vskip 12pt
{\small{\bf Acknowledgement.} Both authors acknowledge the financial support of the {\sl 2019 Summer Faculty Research Fellowships} from College of Arts and Sciences, Texas A\&M University at San Antonio.
Both authors warmly thank Dr. Wei Chen and Dr. Quingyan Wang for some communications.}


\end{document}